\documentclass[12pt,a4paper,psamsfonts]{amsart}
\usepackage[english]{babel}
\usepackage[T1]{fontenc}
\usepackage{fancyhdr}
\usepackage{appendix}
\usepackage{amssymb,amscd,amsxtra,calc}
\usepackage{mathrsfs}
\usepackage{cmmib57}
\usepackage{multirow}
\usepackage[all]{xy}
\usepackage{longtable}
\usepackage[colorlinks=true,linkcolor=blue,anchorcolor=blue,citecolor=blue,pagebackref,linktocpage]{hyperref}
\usepackage{cleveref}
\usepackage{tikz}
\usepackage{cite}
\usepackage{tikz,amsmath}
\tikzset{
    dyn dot/.style={circle, fill, inner sep=2.6pt},
    dyn solid/.style={solid, line width=0.5pt},
    dyn dash/.style={dashed, line width=0.5pt},
    dyn baseline/.style={baseline=(current bounding box.base)}
}

\theoremstyle{plain}
    \newtheorem{thm}{Theorem}[section]

    \newtheorem{lemma}[thm]{Lemma}
    \newtheorem{proposition}[thm]{Proposition}
    \newtheorem{question}[thm]{Question}
    \newtheorem{theorem}[thm]{Theorem}
    \newtheorem{problem}[thm]{Problem}

\theoremstyle{definition}
    \newtheorem{definition}[thm]{Definition}
     
    \newtheorem{assumption}[thm]{Assumption}
    
    \newtheorem*{notation*}{Notation and Terminology}
      
    \newtheorem{remark}[thm]{Remark}
    
\theoremstyle{remark}

\newcommand{\arxiv}[1]{\href{https://arxiv.org/abs/#1}{{\tt arXiv:#1}}}

\newcommand{\bP}{\mathbb{P}}
\newcommand{\bQ}{\mathbb{Q}}

\newcommand{\bC}{\mathbb{C}}

\newcommand{\bZ}{\mathbb{Z}}

\newcommand{\Aut}{\operatorname{Aut}}

\newcommand{\NS}{\operatorname{NS}}

\newcommand{\mstriangle}[1]{
\begin{tikzpicture}[x=0.3cm,y=0.3cm]
\draw (-0.4,-0.433) -- (1.4,-0.433);
\draw (-0.2,-0.7794) -- (0.7,0.7794);
\draw (1.2,-0.7794) -- (0.3,0.7794);
\end{tikzpicture}
}
\newcommand{\mssharp}[1]{
\begin{tikzpicture}[x=0.3cm,y=0.3cm]
\draw (-0.8,-0.5) -- (0.8,-0.5);
\draw (-0.8,0.5) -- (0.8,0.5);
\draw (-0.5,-0.8) -- (-0.5,0.8);
\draw (0.5,-0.8) -- (0.5,0.8);
\end{tikzpicture}
}

\makeatletter

\newcommand{\Rmnum}[1]{\expandafter\@slowromancap\romannumeral #1@}
\makeatother

\begin{document}

\title[Integral Zariski decompositions]{Integral Zariski decompositions on smooth projective surfaces II}
\author{Sichen Li}
\address{
School of Mathematics, East China University of Science and Technology, Shanghai 200237, P. R. China}
\email{\href{mailto:sichenli@ecust.edu.cn}{sichenli@ecust.edu.cn}}
\begin{abstract}
In this paper, we characterize three classes of smooth projective surfaces: relatively minimal elliptic surfaces with \(\chi\ge 1\), rational surfaces $X$ satisfying that \(-K_X\) is nef and $\kappa(X,-K_X)\ge1$, and projective K3 surfaces on which every integral pseudoeffective divisor admits an integral Zariski decomposition.
\end{abstract}
\keywords{integral Zariski decompositions, rational surfaces, elliptic surfaces, projective K3 surfaces}
\subjclass[2010]{14C20, 14J26, 14J27, 14J28}

\maketitle
\section{Introduction}
The geometric significance of Zariski-Fujita decomposition \cite{Fujita79, Zariski62} lies in the fact that, given an integral pseudoeffective divisor $D$ on a smooth projective surface $X$ with Zariski decomposition $D=P+N$ such that $P$ and $N$ are  $\bQ$-divisors,  $P$ is nef, and one has for every sufficiently divisible integer $m\ge1$ the equality
$$
H^0(X,\mathcal O_X(mD))=H^0(\mathcal O_X(mP)).
$$ 
We say that $X$ has {\it bounded Zariski denominators} if there exists such $d(X)=m$ which is independent of the choice of $D$.
In fact, $d(X)$ is defined as the maximum of the denominators appearing in the Zariski decomposition of all integral pseudo-effective divisors.

A celebrated theorem of Bauer, Pokora, and Schmitz \cite{BPS17} states that $X$ has bounded Zariski denominators if and only if $X$ satisfies the bounded negativity conjecture (cf. \cite[Conjecture 1.1]{Bauer et al 2013}) as follows.
\begin{theorem}
\label{BPS-thm}
For a smooth projective surface $X$ over an algebraically closed field, the following two statements are equivalent:
\begin{itemize}
\item[(1)] $X$ has bounded Zariski denominators.
\item[(2)]  $X$ satisfies the bounded negativity conjecture (BNC for short), i.e., there exists an integer $b(X)\ge0$ such that $C^2\ge-b(X)$ for every curve $C\subseteq X$.
\end{itemize}
\end{theorem}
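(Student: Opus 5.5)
The plan is to prove the two implications separately, using the explicit construction of the negative part $N$ in the Zariski decomposition $D=P+N$. Recall that $N=\sum_{i=1}^r a_iC_i$, where the $C_i$ are irreducible curves whose intersection matrix $S=(C_i\cdot C_j)$ is negative definite. The coefficients are determined by the linear system $P\cdot C_j=0$, that is,
$$\sum_i a_i\,(C_i\cdot C_j)=D\cdot C_j \quad (j=1,\dots,r).$$
So $a=S^{-1}v$, where $v\in\bZ^r$ is the vector of intersection numbers $D\cdot C_j$. Consequently every denominator of $N$, and hence of $P$, divides $\det S$ (up to sign).

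For (2)$\Rightarrow$(1), assume $C^2\ge -b(X)$ for every curve. It suffices to bound $|\det S|$ for all negative definite configurations of curves uniformly.
\begin{itemize}
\item First bound $r$. The classes $C_i$ are linearly independent in $\NS(X)$, and they span a negative definite sublattice. By the Hodge index theorem, $r\le\rho(X)-1$.
\item Next bound the entries. The diagonal entries satisfy $-b\le C_i^2<0$. For the off-diagonal entries, negative definiteness of each $2\times2$ principal minor gives $(C_i\cdot C_j)^2<C_i^2C_j^2\le b^2$, so $0\le C_i\cdot C_j<b$.
\item Therefore $S$ ranges over a finite set of integer matrices of size at most $\rho-1$. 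Hence $|\det S|$ is bounded, for instance by Hadamard's inequality by a constant depending only on $b$ and $\rho$.
\end{itemize}
Taking $d(X)$ to be the lcm of all possible values of $|\det S|$ gives bounded denominators.

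For (1)$\Rightarrow$(2), assume every integral pseudoeffective $D$ has $d(X)N$ integral. Let $C$ be a curve with $C^2<0$. Choose an ample integral divisor $H$ and set $D=H+mC$, with $m$ an integer. For suitable $m$ this $D$ has negative part exactly $aC$:
\begin{itemize}
\item Take $D\cdot C=H\cdot C+mC^2<0$.
\item Then $N=tC$ with $t=\dfrac{H\cdot C+mC^2}{C^2}=m+\dfrac{H\cdot C}{C^2}$.
\item Indeed, $P=H-\dfrac{H\cdot C}{C^2}C$ satisfies $P\cdot C=0$. It is nef, since $P=H+\dfrac{H\cdot C}{|C^2|}C$ meets every other curve nonnegatively.
\end{itemize}
Boundedness then says that $d(X)\dfrac{H\cdot C}{C^2}\in\bZ$, i.e.\ $C^2$ divides $d(X)(H\cdot C)$. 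This alone does not bound $C^2$, because $H\cdot C$ may be large.

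The main obstacle is choosing $D$ to make the denominator genuinely equal to $|C^2|$. The idea is to replace $H$ by an integral pseudoeffective divisor $D$ with $D\cdot C=-1$, or more generally coprime to $C^2$, and with the negative part supported on $C$. I would try $D=H+mC$ with $H$ varied over a basis of ample classes. The gcd of the numbers $H\cdot C$, over all integral divisors $H$, divides the index of $\bZ C$ in its saturation in $\NS(X)$. So one first passes to a primitive multiple and uses unimodularity of $\NS$ modulo torsion up to a bounded discriminant factor. Writing any integral divisor $L$ as $L=(L+kH')-kH'$ with $H'$ ample, both pieces are ample, and the denominators from these ample pieces combine linearly. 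This yields that $C^2$ divides $d(X)\cdot\gcd_L(L\cdot C)$. That gcd is bounded by the discriminant of $\NS(X)$, which depends only on $X$. Hence $|C^2|\le d(X)\cdot|\operatorname{disc}\NS(X)|$, which is the desired bound $b(X)$.
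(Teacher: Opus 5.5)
The paper does not prove this statement; it quotes it from Bauer--Pokora--Schmitz. Your direction (2)$\Rightarrow$(1) is the standard argument and is correct: the denominators of $N$ divide $\det S$, $r\le\rho(X)-1$ by the Hodge index theorem, and the entries of $S$ are bounded, so only finitely many $S$ occur. The first half of (1)$\Rightarrow$(2) is also correct. The divisor $H+mC$ has negative part $(m-\frac{H\cdot C}{c})C$ with $c=-C^2$. Writing $L=(L+kH')-kH'$ then gives $c\mid d\,g$, where $g=\gcd\{L\cdot C : L\in\operatorname{Num}(X)\}$ and $d$ is a common denominator. With the paper's definition of $d(X)$ as a maximum, you should take $d=\operatorname{lcm}(1,\dots,d(X))$.

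The gap is in the last step. You assert that $g$ is bounded by $\delta=|\operatorname{disc}\operatorname{Num}(X)|$. You also say that $g$ divides the index of $\bZ C$ in its saturation; this is backwards, since that index divides $g$. The bound $g\le\delta$ holds only when the class of $C$ is primitive. In general, write $C\equiv kC_0$ with $C_0$ primitive and $c_0=-C_0^2\ge 1$. Then $g=kg_0$ with $g_0\mid\delta$: indeed $C_0/g_0$ lies in the dual lattice and $\delta$ kills the discriminant group, so $(\delta/g_0)C_0\in\operatorname{Num}(X)$.

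Non-primitive negative curves really occur. On a Coble surface (the blow-up of $\bP^2$ at the ten nodes of a rational plane sextic), the strict transform of the sextic is a smooth rational curve $C\equiv -2K_X$ with $C^2=-4$. Here $\operatorname{Num}(X)$ is unimodular and $K_X$ is primitive, so $g=2>\delta=1$. Suppose moreover the surface has no $(-2)$-curves. Then its negative curves are $C$ and $(-1)$-curves $E$ with $C\cdot E=2$. So every negative part is either an integral combination of disjoint $(-1)$-curves or equals $\frac{K_X\cdot D}{2}C$. Hence $d(X)\le 2$, and your inequality $|C^2|\le d(X)\delta$ fails.

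The repair is easy. The divisibility $c\mid d\,g$ reads $k^2c_0\mid d\,k\,g_0$, so $kc_0\mid d\,g_0$. This gives $k\le d\delta$ and $|C^2|=k\cdot kc_0\le (d\delta)^2$, which still proves bounded negativity. With this correction your proof of (1)$\Rightarrow$(2) is complete.
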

We say that a smooth projective surface $X$ satisfies \emph{integral Zariski decompositions} (IZD for short) if every integral pseudoeffective divisor $D$ on $X$ admits a Zariski-Fujiki decomposition $D=P+N$ with $P$ and $N$ integral divisors.
Note that $X$ satisfies the BNC provided that $X$ satisfies the IZD.

Let $X_n$ be a composite of blow-ups of $\bP^2$ at points $p_1,\cdots, p_n$ in a very general position, where $n\ge10$.
There are two well-known conjectures about divisors:
\begin{itemize}
	\item[(1)] the only negative curves of $X_n$ are $(-1)$-curves;
	\item[(2)] the divisor class $\sqrt{n}H-\sum_{i=1}^r E_i$ is nef.
\end{itemize}
The first one is known as the "$(-1)$-curves Conjecture" (cf. \cite[Conjecture 2.3]{CHMR13}), while the second statement is the famous Nagata Conjecture \cite[p. 772]{Nagata59}.
It is known that $(1)\Rightarrow (2)$ by  \cite[Lemma 2.4]{CHMR13}.
de Fernex  showed in \cite[Proposition 2.4]{deFernex05} that $C$ is a $(-1)$-curve if $C$ is a negative rational curve on $X_n$.
We note that every curve $C$ on $X_n$ has $C^2\ge-1$ if and only if $X_n$ satisfies the IZD (cf. \cite[Proposition 1.8]{Li19}).
Thus, it is crucially important to classify smooth projective surfaces $X$ with the IZD as follows.
\begin{problem}
\cite[Problem 2.3]{HPT15}
\label{HPT-Prb}
Classify smooth projective surfaces $X$ with the IZD.
\end{problem}
\begin{remark}
 We refer to \cite{HPT15} and \cite[Claim 2.12]{Li19} for the classification of projective K3  surfaces $X$ with Picard number $\rho(X)=2$ and the IZD.
 The characterization for smooth projective surfaces $X$ with $\rho(X)=2$, $\kappa(X)\le1$ and the IZD was established in \cite{Li19}.
Recently,  a numerical characterization of the IZD was proven in \cite{Li26}, which also showed that every fiber of fiber surfaces with the IZD is irreducible.
Thus,  the IZD is not a birational invariant, since the Hirzebruch surface \(\mathbb{F}_1\) admits the IZD (cf. \cite[Claim 2.10]{Li19}), whereas a rational Mori dream  elliptic surface fails to possess the IZD, owing to the inevitable presence of reducible fibers (cf. \cite[Theorem 5.1.4.2]{ADHL15}, see also Theorem \ref{elliptic-thm}).
\end{remark}
As the first address to Problem \ref{HPT-Prb} in this paper, we study the IZD for the cases of relatively minimal elliptic surfaces $X$ with $\chi(\mathcal O_X)\ge1$ and $b(X)>0$.
\begin{theorem}
\label{elliptic-thm}
Let $X$ be a relatively minimal  elliptic surface with $\chi(\mathcal O_X)\ge1$ and  $b(X)>0$.
Suppose $X$ satisfies the IZD.
Then $X$  falls into the following cases.
\begin{enumerate}
	\item $X$ is a Jacobian rational elliptic surface and the Mordell-Weil group $E(K)\cong E_8$.
	\item $X$ is a unnodal Halphen surface of index $m\ge2$.
	\item $X$ is a unnodal Enriques surface.
	\item $X$ is a Jacobian elliptic surface with $\kappa(X)=1$ and $\chi(\mathcal O_X)=1$.
	\item $X$ is an elliptic surface with $\kappa(X)=1$ and no  section.
 \item $X$ is an elliptic K3 surface with no  section.
 \end{enumerate}
In the Cases $(1),(2)$ and $(3)$, every negative curve $C$ on $X$ is a $(-1)$-rational curve.
\end{theorem}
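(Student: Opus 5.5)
Suppose $X$ has the IZD. The plan is to show first that every fibre of the elliptic fibration $f\colon X\to B$ is irreducible, and then to split into cases according to the Kodaira dimension and to whether $f$ has a section.

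\textbf{Step 1: fibres are irreducible.} By the result recalled in the Remark (\cite{Li26}), every fibre of a fibred surface with the IZD is irreducible. Concretely, suppose some fibre is reducible, $F=\sum a_iC_i$. Then each $C_i$ is a $(-2)$-curve, since $X$ is relatively minimal and $K_X\cdot C_i=0$. Pick a component $C_1$ and the integral effective divisor $D=F-C_1$ (or a suitable sum of components). Because the intersection matrix of the proper subconfiguration is negative definite, the Zariski decomposition of $D$, or of $D+kC_1$, has non-integral negative part; one sees this from the inverse of a Dynkin-type Cartan matrix having non-integral entries. This contradicts the IZD.

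So all fibres are irreducible. Every curve with $C^2<0$ is then horizontal, and adjunction with $K_X\equiv (\chi(\mathcal O_X)-2+2g(B))F+\sum(m_i-1)F_i$ gives $C^2=2p_a(C)-2-K_X\cdot C$.

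\textbf{Step 2: case analysis.} By the canonical bundle formula, $\kappa(X)\in\{-\infty,0,1\}$.

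\emph{$\kappa=-\infty$.} Then $X$ is rational with $\chi(\mathcal O_X)=1$ and is a Halphen surface of some index $m$.
\begin{itemize}
\item If $m=1$, $X$ is a Jacobian rational elliptic surface. Irreducibility of all fibres together with the Shioda--Tate formula forces $E(K)\cong E_8$; this is case $(1)$.
\item If $m\ge 2$, irreducible fibres means exactly that $X$ is unnodal; this is case $(2)$.
\end{itemize}
In both subcases $-K_X=F/m$ is nef. A negative curve $C$ has $K_X\cdot C\le 0$, so $C^2=2p_a-2-K_X\cdot C<0$ forces $p_a=0$ and $K_X\cdot C\in\{0,-1\}$. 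The value $0$ would give $C^2=-2$ with $C\cdot F=0$, i.e. a fibre component, which Step 1 excludes. Hence $C$ is a $(-1)$-curve, proving the final assertion for $(1)$ and $(2)$.

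\emph{$\kappa=0$.} Here $\chi(\mathcal O_X)\ge1$ leaves K3 and Enriques surfaces.
\begin{itemize}
\item \emph{Enriques.} Since $b(X)>0$ and $K_X$ is numerically trivial, a negative curve is a $(-2)$-curve. Every Enriques surface carries elliptic fibrations, and a nodal one has a fibration in which a $(-2)$-curve lies in a fibre; I expect this to be the hardest point. I would argue that on a nodal Enriques surface a suitable $(-2)$-curve $R$ and a half-fibre $E$ with $E\cdot R=0$ produce a divisor such as $E+R$ or $2E+R$ whose Zariski decomposition has negative part $\tfrac12R$. So the IZD forces $X$ to be unnodal; this is case $(3)$. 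But then there are no $(-2)$-curves at all, and since $C^2$ is even on Enriques surfaces while $b(X)>0$, there are no negative curves whatsoever. The phrase ``$(-1)$-rational curve'' in case $(3)$ therefore needs care and may be vacuous.
\item \emph{K3.} If the fibration has a section $S$, then $S^2=-2$, and $D=S+F$ is nef-adjacent: $(S+F)\cdot S=-1$, so its Zariski decomposition is $D=(S+F-\tfrac12 S)+\tfrac12 S$, which is non-integral. Hence no section; this is case $(6)$.
\end{itemize}

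\emph{$\kappa=1$.} If there is a section $S$, then $S^2=-\chi(\mathcal O_X)$. For $D=S+aF$ with $a$ chosen so that $D\cdot S=-1$, i.e. $a=\chi(\mathcal O_X)-1$, the negative part is $\tfrac{1}{\chi}S$. This is integral only when $\chi(\mathcal O_X)=1$, giving case $(4)$. Otherwise there is no section, giving case $(5)$.
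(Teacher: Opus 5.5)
Your skeleton is the paper's: irreducible fibres, then a split by the sign of $\delta_X$. The rational case uses nefness of $-K_X$ plus Zariski's lemma, and Shioda--Tate in place of the Miranda--Persson classification is fine. The K3 and $\kappa=1$ cases use the section argument of Proposition~\ref{section-prop}, which your explicit Zariski decompositions reproduce.

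\textbf{The genuine gap is the Enriques case.} You never establish that the pencil you need exists. The computation you sketch also gives no contradiction: if $E\cdot R=0$ then $(E+R)\cdot R=(2E+R)\cdot R=-2$, so both divisors have negative part exactly $R$, which is integral. The paper fills this step with Cossec's theorem (a nodal Enriques surface has an elliptic fibration with a reducible fibre containing a $(-2)$-curve), followed by irreducibility of fibres. Alternatively, a half-fibre with $E\cdot R=1$ would give your divisor $E+R$ negative part $\tfrac12R$. A self-contained fix avoids fibrations altogether. $\mathrm{Num}(X)\cong U\oplus E_8(-1)$ is unimodular and the class of $R$ is primitive, so some class meets $R$ oddly. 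Since curve classes generate $\mathrm{Num}(X)$, some curve $D$ has $R\cdot D$ odd, contradicting Theorem~\ref{Num-Thm}(a). Your observation that the Enriques case is then empty under $b(X)>0$, so the final assertion is vacuous there, is correct.

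\textbf{Your concrete justification of Step 1 is also wrong as written.} Take a fibre $C_1+C_2$ of type $I_2$ or $III$, so $C_1\cdot C_2=2$. Every divisor supported on it has integral Zariski decomposition; for instance $F-C_1=C_2$ has negative part $C_2$. The $\tfrac12$ from the inverse Cartan matrix of $A_1$ always multiplies an even number. Non-integrality therefore needs a curve meeting a component oddly, and that must come from outside the fibre.

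This cannot be repaired in general. On a K3 surface with $\NS(X)\cong U(2)\oplus A_1$, every intersection number is even, so the IZD holds by Theorem~\ref{Num-Thm}. Yet the root of $A_1$ is orthogonal to an isotropic class, so the corresponding elliptic fibration has a reducible fibre of exactly this type.

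What saves the theorem is that you only need Step 1 on rational (and Enriques) surfaces. There $\NS(X)$ is unimodular, and the parity argument above shows that the IZD excludes $(-2)$-curves altogether. That gives irreducible fibres, unnodality, and Mordell--Weil rank $8$ via Shioda--Tate at once.
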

\begin{remark}
To the best of our knowledge, we confirm the existence of each case in Theorem \ref{elliptic-thm}.
The existence of Case (1) may follow from \cite[Theorem 8.8]{SS19}.
It is well known that the Cases (2) and (3) are general points in the moduli spaces of Halphen surfaces and Enriques surfaces, respectively (cf. \cite[Remark 2.9]{CD12} and \cite{Cossec85}).
For Case (4), given any integer \(g>1\), there exists an elliptic surface \(\pi: X \to B\) with \(\kappa(X)=1\) and \(\chi(\mathcal{O}_X)=1\) over a smooth projective curve $B$ of genus $g$ such that \(E(K)\) is infinite which was first constructed in \cite{LL26}.
For Case (5), every minimal simply connected smooth surface with \(p_g(X)=q(X)=0\) and \(\kappa(X)=1\) -- these are known as Dolgachev surfaces -- has exactly two multiple fibers with coprime multiplicities by \cite[Chapter 2]{Dolgachev81}.
For Case (6), we will classify projective K3 surfaces satisfying the IZD provided that every automorphism is of zero entropy, see Theorem \ref{K3-thm}.
\end{remark}
It is easy to show that $X$ satisfies the IZD if every negative curve $C$ on $X$ has $C^2\ge-1$ (cf. \cite[Theorem 2.2]{BPS17}, see also Theorem \ref{Num-Thm}).
Thus, one of the essential goals of Problem \ref{HPT-Prb} is to classify surfaces with the IZD and a $(-k)$-curve where $k\ge2$.
Nevertheless, finding more examples of surfaces with the IZD seems highly constrained, but the case of projective K3 surfaces points us towards one of the accessible goals. 
\begin{definition}
\label{AFE-defn}
A hyperbolic lattice is \emph{restricted fully even} if all entries of its Gram matrix are even, its irreducible orthogonal summands contain no $U$, $A_n(n\ge2)$, $D_n , E_6, E_7$ and $E_8$, and it is not isometric to $U(k)\oplus lA_1\oplus L$ with $k, l\in \bZ_{>0}$.
\end{definition}
\begin{theorem}
\label{K3-thm}
Let $X$ be a projective K3 surface with $\rho(X)\ge3$ and $b(X)>0$.
Then the following statements hold.
\begin{enumerate}
\item[(i)] $\rho(X)\le 11$ if $X$ satisfies the IZD.
\item[(ii)] Suppose every automorphism of $X$ is of zero entropy.
	Then  $X$ satisfies the IZD  if and only if  $\NS(X)$ is isometric to one of the following 17 restricted fully even lattices:	
$$
\begin{tabular}{l l}
\hline\hline
$\varrho$ & \quad $\Lambda$ \\
\hline
3 & \quad    $(6)\oplus 2A_1$, $(2)\oplus A_1\oplus A_1(2), (2)\oplus A_1\oplus A_1(4),(8)\oplus A_1\oplus A_1(4), (16)\oplus 2A_1,$ \\ &
  \quad  $(-6,2,-2,0,4,-8), (-2k^2,0,-2,k,2,-2), \quad k\in\{4,6,8, 10, 12\},$ \\ 
   & \quad  $(-2,0,-2,k,2k,-2k^2), \quad  k\in\{2,4,6,8\}$, \\
4 &  \quad    $(8)\oplus 3A_1$, $(2)\oplus A_1\oplus 2A_1(2)$. \\
  \hline\hline
\end{tabular}
$$
	\item[(iii)] Suppose $X$ has an automorphism of positive entropy.
Then $X$ satisfies the IZD provided that $\NS(X)$ is restricted fully even.
\end{enumerate}
\end{theorem}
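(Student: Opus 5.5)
The plan rests on the numerical characterization of the IZD (Theorem \ref{Num-Thm}, from \cite{Li26}). On a K3 surface every negative curve is a smooth rational $(-2)$-curve, so $b(X)>0$ means that $(-2)$-curves exist. The criterion should then read as follows: $X$ satisfies the IZD if and only if, for every negative definite configuration of $(-2)$-curves $C_1,\dots,C_r$ supporting the negative part of some Zariski decomposition, the inverse of the intersection matrix applied to the integral vector $(D\cdot C_i)_i$ is always integral.

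For a single $(-2)$-curve this already fails whenever $D\cdot C$ is odd, since the negative part would be $\tfrac{1}{2}(D\cdot C)\,C$. So the first step is to show that the IZD forces $D\cdot C\in 2\bZ$ for every $(-2)$-curve $C$ and every $D\in\NS(X)$. In other words, every $(-2)$-class that is realized by a curve must lie in $2\NS(X)^\vee$. The second step treats two $(-2)$-curves meeting transversally at one point. Their span is an $A_2$ configuration, and it is never integrally invertible with even entries. Hence the $(-2)$-curves must be pairwise disjoint or meet with even intersection, which rules out $A_n$ ($n\ge 2$), $D_n$ and $E_{6,7,8}$ summands. Combined with evenness of all pairings, this gives the ``fully even'' condition on the Gram matrix.

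For (i), the $(-2)$-curves span an orthogonal sum of copies of $A_1$, possibly scaled. Using $\NS(X)\cdot C\subseteq 2\bZ$, the discriminant group of $\NS(X)$ contains a $2$-elementary part of large rank. I would then bound the $2$-length of the discriminant by $22-\rho(X)$, using the transcendental lattice and Nikulin's theory, to obtain $\rho(X)\le 11$. I expect this bound to be the main obstacle, because it requires showing that the evenness forces a $2$-length of at least $\rho(X)$.

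For (ii), zero entropy of every automorphism together with $\rho\ge3$ means $\Aut(X)$ is either finite or virtually preserves an elliptic fibration. I would therefore use Nikulin's and Vinberg's classifications of $2$-reflective lattices, together with those admitting a unique elliptic fibration up to automorphisms. Within those lists I would impose the fully even and exclusion conditions of Definition \ref{AFE-defn}. The excluded family $U(k)\oplus lA_1\oplus L$ is expected to contain a reducible fiber or half-fiber, which by \cite{Li26} contradicts the IZD. A case-by-case check against the criterion should then leave exactly the 17 lattices listed.

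For (iii), the direction is the converse. Assuming $\NS(X)$ is restricted fully even, I would show that every negative definite configuration of $(-2)$-curves consists of pairwise disjoint curves. Then the negative part of $D$ is $\sum \tfrac{1}{2}(D\cdot C_i)\,C_i$, which is integral because all pairings are even, and Theorem \ref{Num-Thm} gives the IZD.
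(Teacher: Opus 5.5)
\textbf{What matches the paper.} Your argument for (iii), and hence for the sufficiency half of (ii), is the paper's. Full evenness gives $C\cdot D\in 2\bZ$ for every $(-2)$-curve $C$. If two $(-2)$-curves have $C_1\cdot C_2=2m\ge 2$, then $\det I(C_1,C_2)=4-4m^2\le 0$, so negative definite configurations consist of disjoint curves. One correction: the negative part is $-\tfrac12\sum_i (D\cdot C_i)\,C_i$, so your sign is off. For the necessity half of (ii) you also follow the paper's strategy of filtering the finite-automorphism list and the zero-entropy list, though you only assert the outcome.

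\textbf{The gap in (i).} Part (i) does not go through, and the cause is an overstatement in your second step. The IZD constrains only pairings with $(-2)$-curves: it gives $C\in 2\NS(X)^\vee$ for such $C$. It says nothing about pairings between other classes, so it does not make the Gram matrix fully even. Whether it does, in the positive-entropy case, is exactly the question the paper leaves open after Theorem \ref{K3-thm}.

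Consequently the classes $\tfrac12 C$ only produce a $2$-elementary subgroup of $A_{\NS(X)}$ of rank $\dim_{\mathbb F_2}(R+2\NS(X))/2\NS(X)$, where $R$ is the span of the $(-2)$-curves. To combine with $\ell(A_{\NS(X)})=\ell(A_{T(X)})\le 22-\rho(X)$ you would need this rank to be at least $\rho(X)-1$, and nothing forces that. Nor need the $(-2)$-curves span copies of $A_1$: they are only forced to meet evenly.

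The missing idea is elementary. If $\rho(X)\ge 12$, then $U\hookrightarrow \NS(X)$, so $X$ carries a Jacobian elliptic fibration. Its zero section is a $(-2)$-curve $C$ with $C\cdot F=1$, which is odd and contradicts Lemma \ref{num-K3-lem}(a).

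\textbf{Effect on (ii).} The same overstatement undermines your filtering in (ii). A lattice on the lists that is not fully even, such as $(0,2,2,0,1,-14)$ or $(0,3,4)\oplus A_1(9)$, cannot be discarded just because its Gram matrix has odd entries. You must exhibit a $(-2)$-curve with an odd pairing, and the paper does this by hand for these two lattices.
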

\begin{remark}
By  \cite[Corollary 2.9(i) and Remark 2.11]{Morrison84}, for every even lattice $N$ of signature $(1,\rho-1)$ with $\rho\le 11$, there exists a projective K3 surface $X$ with $\NS(X)\cong N$.
This provides the existence of projective K3 surfaces $X$ such that  $\NS(X)$ is restricted fully even.
\end{remark}
By Theorem \ref{K3-thm}, we ask the following question.
\begin{question}
Let $X$ be a projective K3 surface admitting an automorphism of positive entropy.
Suppose $X$ satisfies the IZD.
Is $\NS(X)$ restricted fully even?
\end{question}
Finally, we study the IZD for rational surfaces $X$ with $-K_X$ nef.
\begin{theorem}
\label{rational-thm}
Let $X$ be a smooth rational surface with $-K_X$ nef and $b(X)>0$.
Suppose $\kappa(X,-K_X)\ge1$.
Then $X$ satisfies the IZD if and only if  $X$ falls into the following cases:
\begin{enumerate}
   \item $X$ is a del Pezzo surface.
	\item $X$ is a Jacobian rational elliptic surface and $E(K)\cong E_8$.
	\item $X$ is a unnodal Halphen surface of index $m\ge2$.
\end{enumerate} 
\end{theorem}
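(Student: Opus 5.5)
Since $-K_X$ is nef and $\kappa(X,-K_X)\ge1$, we have either $K_X^2>0$ or $K_X^2=0$. The plan is to split along this dichotomy, using two tools from the paper: the numerical criterion (Theorem \ref{Num-Thm}: every negative curve has $C^2\ge -1$ implies the IZD), and the irreducibility of fibers for fibred surfaces with the IZD. Adjunction gives, for an irreducible curve $C$ with $C^2<0$,
$$
C^2 = 2p_a(C)-2 + (-K_X)\cdot C \ge -2 .
$$
Equality forces $p_a(C)=0$ and $-K_X\cdot C=0$. So every negative curve is either a $(-1)$-curve or a $(-2)$-curve orthogonal to $K_X$.

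\textbf{Case $K_X^2>0$.} Here $-K_X$ is nef and big, so $X$ is a weak del Pezzo surface.
\begin{enumerate}
\item If $X$ has no $(-2)$-curves, then $-K_X$ is ample, $X$ is del Pezzo, and Theorem \ref{Num-Thm} gives the IZD.
\item If there is a $(-2)$-curve $C$, we show the IZD fails by exhibiting a nonintegral decomposition. Choose a $(-1)$-curve $E$ with $E\cdot C=1$. Such an $E$ should exist for a weak del Pezzo surface with $b(X)>0$ obtained by blowing up infinitely near points; the degenerate cases $\mathbb F_2$ and its relatives are handled directly. Then take $D=E$ or $D=H+E$-type combinations. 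The negative part is supported on $C$ with coefficient $\tfrac12$, since $D\cdot C=1$ and $C^2=-2$.
\item Alternatively, apply the numerical characterization of \cite{Li26} directly: it excludes the existence of a $(-2)$-curve $C$ together with a divisor $D$ having $D\cdot C$ odd. This holds whenever $C$ is not $2$-divisible in the lattice. The lattice $\NS(X)$ is unimodular, so such a $D$ always exists, namely any $D$ with $D\cdot C=1$. It remains to check that such a $D$ can be chosen pseudoeffective, hence $D+C$-type with $P$ having denominator $2$.
\end{enumerate}

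\textbf{Case $K_X^2=0$.} Now $\kappa(X,-K_X)=1$, so $|-mK_X|$ defines an elliptic fibration. Then $X$ is a relatively minimal rational elliptic surface: either Jacobian ($m=1$) or Halphen of index $m\ge2$.
\begin{enumerate}
\item For necessity, invoke Theorem \ref{elliptic-thm}, whose only rational cases are (1) and (2). Equivalently, the IZD forces all fibers to be irreducible, by the result of \cite{Li26}. For Jacobian surfaces, Shioda--Tate then gives Mordell--Weil rank $8$ with trivial root lattice, so $E(K)\cong E_8$. For index $m\ge2$, irreducible fibers means unnodal.
\item For sufficiency, note that in these cases every $(-2)$-curve is a fiber component, since $K_X\cdot C=0$ and $C$ is vertical. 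Irreducibility of fibers therefore rules out $(-2)$-curves, so all negative curves are $(-1)$-curves. Theorem \ref{Num-Thm} then finishes.
\end{enumerate}

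The main obstacle is the weak del Pezzo step: producing, uniformly, a pseudoeffective integral divisor whose Zariski decomposition has half-integral negative part along a $(-2)$-curve. I would try first $D=E$, where $E$ is a $(-1)$-curve meeting $C$, or a fiber class of a conic bundle meeting $C$ oddly. I would verify that $D$ is not nef and that its negative part is $\tfrac12 C$.
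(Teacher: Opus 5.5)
The $K_X^2=0$ case and the sufficiency direction follow the paper: Theorem~\ref{elliptic-thm} gives necessity, and in the listed cases there are no $(-2)$-curves, so every negative curve is a $(-1)$-curve and the IZD follows. The gap is the step you flag yourself. In the weak del Pezzo case you never prove that a $(-2)$-curve $C$ rules out the IZD.

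Route (2) would not do it. The $(-1)$-curve $E$ with $E\cdot C=1$ is only asserted, and $\mathbb{F}_2$ has no $(-1)$-curves at all. Moreover $D=E$ is useless: a negative curve has Zariski decomposition $P=0$, $N=E$, which is integral. More basically, $D\cdot C=1>0$ does not put $C$ into the negative part; you need $D\cdot C<0$. For example, $D=A+kC$ with $A$ nef, $A\cdot C$ odd and $2k>A\cdot C$ gives $N=(k-\tfrac{A\cdot C}{2})C$. That construction is exactly what Theorem~\ref{Num-Thm}(a) already packages.

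Route (3) closes the gap once you notice that nothing needs to be pseudoeffective. Condition (a) of Theorem~\ref{Num-Thm} is linear in $D$, and $\NS(X)$ is generated by classes of curves, since every divisor is a difference of effective ones. So if some class $D$ has $C\cdot D$ odd, write $D=\sum a_iD_i$ with $D_i$ curves. Some $D_i\neq C$ then has $C\cdot D_i$ odd, so $-2\nmid C\cdot D_i$ and the IZD fails. Such a $D$ exists because $\NS(X)=H^2(X,\bZ)$ is unimodular for a rational surface and $C$ is primitive ($C=kD'$ would force $k^2\mid 2$).

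This argument is uniform and covers $\mathbb{F}_2$ too. The paper instead invokes the classification of weak del Pezzo surfaces ($\mathbb{P}^1\times\mathbb{P}^1$, $\mathbb{F}_2$, or blow-ups of $\mathbb{P}^2$ in almost general position) and treats $\mathbb{F}_2$ separately. It then reads off, from the explicit list of $(-2)$-curves, a class $E_{i+1}$ or $E_3$ meeting $C$ with intersection number $1$. This implicitly uses the same linearity, since those $E_j$ are total transforms rather than curves.

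Pushed one step further, your route shows more: on a rational surface with the IZD, every negative curve $C$ satisfies $C/C^2\in\NS(X)^\vee=\NS(X)$, hence $C^2=-1$.
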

The paper is organized as follows.
In Section \ref{Pre}, we collect two results on smooth projective surfaces with the IZD from \cite{Li26}.
We prove Theorems \ref{elliptic-thm}, \ref{K3-thm}, and \ref{rational-thm}  in Sections \ref{Sect-elliptic}, \ref{Sect-K3} and \ref{Sect-rational}, respectively.
 \section{Preliminaries}
\label{Pre}
{\bf Notation and Terminology.}
Let $X$ be a smooth projective surface over $\bC$.
\begin{itemize}
\item $K_X$ is the canonicial divisor of $X$.
\item $-K_X$ is anti-canonical divisor of $X$.
\item $q(X):=h^1(\mathcal O_X)$ is the irregularity of $X$.
\item  $\NS(X)$ is the N\'eron-Severi group of $X$.
\item By a curve on $X$, we mean a reduced and irreducible curve.
\item A negative curve on $X$ is a curve with negative self-intersection.
\item A $(-k)$-curve on $X$ is a curve $C$ with $C^2=-k<0$.
\item The intersection matrix of some divisors $C_1,\cdots,C_k$ on $X$ is $$I(C_1,\cdots,C_k)=(C_i\cdot C_j)_{1\le i,j\le k}.$$
\item  We say that $D$ is an integral pseudoeffective divisor $D$ on $X$, if it is linear equivalent to $\sum a_iC_i$ with each $a_i\in\bZ$ and each $C_i$ is a curve on $X$ and $(D\cdot C)\ge0$ for every nef curve $C$ on $X$.
\item We say that $X$ satisfies $b(X)>0$ if it has at least one negative curve.
\end{itemize}
\begin{theorem}
\cite[Theorem 1.2]{Li26}
\label{Num-Thm}
Let $X$ be a smooth projective surface. 
Then $X$ satisfies the IZD if and only if the following statements hold.
\begin{itemize}
	\item[(a)] $C^2|(C\cdot D)$ for every negative curve $C$ and every curve $D$.
	\item[(b)] for any two curves $C_1$ and $C_2$, if their intersection matrix is negative definite, then $(C_1\cdot C_2)=0$. 
\end{itemize}	
\end{theorem}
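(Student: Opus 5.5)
The plan is to work directly with the linear-algebraic description of the Zariski decomposition. If $D=P+N$ with $N=\sum_i a_iC_i$, then the support $C_1,\dots,C_k$ has negative definite intersection matrix $M=I(C_1,\dots,C_k)$, and $P\cdot C_j=0$ for all $j$. Hence the coefficient vector is
$$a=M^{-1}v,\qquad v=(D\cdot C_j)_j\in\bZ^k .$$
Both directions of the theorem reduce to deciding when this vector is integral.

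\emph{Sufficiency.} Assume (a) and (b), and let $D$ be integral pseudoeffective with negative part supported on $C_1,\dots,C_k$. Every pair $C_i,C_j$ in the support has a negative definite $2\times 2$ intersection matrix, since it is a principal minor of $M$. By (b), $C_i\cdot C_j=0$, so $M$ is diagonal with entries $C_i^2$, and $a_i=(D\cdot C_i)/C_i^2$. Write $D\equiv\sum b_\ell D_\ell$ with integers $b_\ell$ and curves $D_\ell$. For $D_\ell\neq C_i$, condition (a) gives $C_i^2\mid D_\ell\cdot C_i$, and trivially $C_i^2\mid C_i\cdot C_i$. Hence $C_i^2\mid D\cdot C_i$, so $N$ and therefore $P$ are integral.

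\emph{Necessity of (a).} Let $C$ be a negative curve and $D$ a curve. I would fix a very ample $H$ with $H\cdot D\ge -D^2$ when $D\neq C$, so that $H+D$ meets every curve nonnegatively. Then consider
$$L=H'+mC,\qquad H'\in\{H,\ H+D\},\quad m\gg0,$$
so that $L\cdot C<0$. The candidate decomposition is
$$P=H'-\frac{H'\cdot C}{C^2}\,C,\qquad N=\Bigl(m+\frac{H'\cdot C}{C^2}\Bigr)C .$$
Here $P\cdot C=0$, and the coefficient $-H'\cdot C/C^2$ is nonnegative, so $P\cdot C'\ge H'\cdot C'\ge 0$ for every curve $C'\neq C$. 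Thus $P$ is nef and $N$ is effective with negative definite support. By uniqueness this is the Zariski decomposition of $L$, so IZD forces $C^2\mid H'\cdot C$ for both choices of $H'$. Subtracting the two divisibilities gives $C^2\mid D\cdot C$. For $D=C$ the claim is trivial.

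\emph{Necessity of (b).} Suppose $C_1,C_2$ span a negative definite matrix and $c=C_1\cdot C_2>0$. By (a), already established, $|C_1^2|$ and $|C_2^2|$ both divide $c$, so $c\ge\max(|C_1^2|,|C_2^2|)$ and therefore $c^2\ge C_1^2C_2^2$. This contradicts $\det M=C_1^2C_2^2-c^2>0$. So (b) follows from (a) once IZD is assumed, and no second construction is needed.

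\emph{Main obstacle.} The step needing the most care is verifying that $P$ above really is nef and is the Zariski decomposition. In particular, one must choose $H$ ample enough that $H+D$ stays nonnegative on $D$ itself, and invoke uniqueness of the Zariski decomposition to identify $N$.
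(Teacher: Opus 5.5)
Your proof is correct. The paper gives no proof of this statement: it is quoted verbatim as \cite[Theorem 1.2]{Li26}, so there is no in-text argument to compare against line by line.

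The only trace of the original argument is a remark in the proof of Proposition~\ref{even-lattice-prop}, that condition (a) implies condition (b) ``by \cite[Proof of Proposition 2.1]{Li26}''. Your determinant argument supplies exactly that implication, in full generality. If $c=C_1\cdot C_2>0$, then $|C_1^2|$ and $|C_2^2|$ both divide $c$, so $c^2\ge C_1^2C_2^2$, which contradicts $\det I(C_1,C_2)>0$. This also shows that (b) is redundant in the statement: it follows formally from (a), with no use of IZD.

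The rest of your proof is self-contained.
\begin{itemize}
\item Sufficiency: using (b), you reduce to a diagonal intersection matrix. There $a_i=(D\cdot C_i)/C_i^2$ is integral by (a), applied to the components of an integral expression of $D$.
\item Necessity of (a): you test IZD on $H+mC$ and $H+D+mC$, write their Zariski decompositions explicitly, and identify them by uniqueness. Subtracting the two divisibilities is what lets you reach an arbitrary curve $D$, including negative ones. You correctly note that only nefness of $H+D$ is needed there, which $H\cdot D\ge -D^2$ secures.
\end{itemize}

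One small point is worth keeping explicit. The necessity step uses IZD in the paper's sense: the coefficient of $C$ in $N$ is an integer, not merely that the numerical class of $N$ lies in $\NS(X)$.
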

\begin{proposition}
\cite[Proposition 1.2]{Li26}
\label{irre-prop}
Let $X$ be a smooth projective surface with the IZD.
If $X$ admits a fibration $\pi: X \to B$ over a curve $B$, then every fiber is  irreducible.
\end{proposition}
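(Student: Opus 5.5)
The statement to prove is Proposition \ref{irre-prop}: if $X$ satisfies the IZD and admits a fibration $\pi\colon X\to B$, then every fiber of $\pi$ is irreducible. I would argue by contradiction using the numerical criterion of Theorem \ref{Num-Thm}.

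Suppose some fiber $F$ is reducible, and write $F=\sum_{i=1}^r m_iC_i$ with $r\ge2$, $m_i\ge1$ and distinct curves $C_i$. Zariski's lemma gives the basic facts:
\begin{itemize}
\item $C_i^2<0$ for every $i$, because $C_i\cdot F=0$ and $C_i$ is not a rational multiple of $F$;
\item every proper subset of the components has negative definite intersection matrix;
\item $F$ is connected, so the dual graph of $\{C_1,\dots,C_r\}$ is connected.
\end{itemize}

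\textbf{Case $r\ge3$.} By connectedness there are $i\ne j$ with $C_i\cdot C_j>0$. The pair $\{C_i,C_j\}$ is a proper subset of the components, so $I(C_i,C_j)$ is negative definite. This contradicts condition (b) of Theorem \ref{Num-Thm}.

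\textbf{Case $r=2$.} Here $I(C_1,C_2)$ is only negative semidefinite, since $F$ lies in its kernel, so (b) does not apply and I use (a) instead. Set $a:=C_1\cdot C_2>0$. From $C_1\cdot F=0$ and $C_2\cdot F=0$ we get
\[
C_1^2=-\frac{m_2a}{m_1},\qquad C_2^2=-\frac{m_1a}{m_2}.
\]
Condition (a), applied with the negative curve $C_1$ and the curve $C_2$, says $C_1^2\mid C_1\cdot C_2$, that is, $m_2a/m_1$ divides $a$. Hence $m_2\ge m_1$, and symmetrically $m_1\ge m_2$. So $m_1=m_2=:m$, and then $C_1^2=C_2^2=-a$.

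Write $F=mF_0$ with $F_0=C_1+C_2$. It remains to rule out this configuration. I would use a horizontal curve. Take an ample curve $H$, or a multisection; it meets $F$ positively, so $H\cdot C_1+H\cdot C_2=(H\cdot F)/m>0$. Condition (a) then requires $a\mid H\cdot C_1$ and $a\mid H\cdot C_2$ for every curve $H$. If $a\ge2$, I expect to derive a contradiction by choosing $H$ as a suitable very ample divisor $H'+C_1$ type combination, or more cleanly by showing the class $C_1$ would then be divisible by $a$ in the relevant pairing, contradicting $C_1^2=-a$ together with $C_1\cdot C_1=-a$ being divisible only by $a$.

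\textbf{Main obstacle.} The truly hard case is $a=1$, that is, two $(-1)$-curves meeting transversally once with equal multiplicity in the fiber. Here conditions (a) and (b) are not violated locally, and I would have to use global information. The plan is to apply condition (a) to a curve $D$ that is simultaneously tangent to, or lies in, neighbouring fibers. Alternatively, I would exploit the IZD directly: write down the integral divisor $D=C_1+2H$, or $D=kC_1+\epsilon$, and show its Zariski decomposition has negative part $\tfrac12 C_1$-type non-integral coefficients.

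Concretely, I would test the pseudoeffective divisor $D=C_1+C_2-C_2=\ldots$; the natural choice is $D=2C_1+C_2$. Its Zariski decomposition should be $P=\tfrac{3}{2}F_0$ and $N=\tfrac12 C_1$, hence non-integral. This works because $D\cdot C_1=-2+1=-1<0$ and $D\cdot C_2=2-1=1$, so the negative part is supported on $C_1$ with coefficient $\tfrac12$. So I expect the computation $N=\tfrac{D\cdot C_1}{C_1^2}C_1=\tfrac12C_1$, with $P=D-\tfrac12C_1$ nef, to settle the case $a=1$, and in fact every $a$, making the divisibility detour unnecessary.
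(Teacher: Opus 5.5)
Your case $r\ge 3$ is correct. The reduction for $r=2$ to $m_1=m_2=m$ and $C_1^2=C_2^2=-a$, with $a=C_1\cdot C_2$, is also correct. (The intermediate inequality should read $m_1\ge m_2$, since $C_1^2\mid a$ means $m_2\mid m_1$, but the symmetric conclusion is unaffected.)

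The gap is the last step. For $D=2C_1+C_2=F_0+C_1$ you have $D\cdot C_1=-2a+a=-a$ and $C_1^2=-a$, so $\frac{D\cdot C_1}{C_1^2}=1$, not $\frac12$. For $a=1$ you seem to have used $C_1^2=-2$ instead of $-1$; note also that $D-\frac12C_1=\frac32C_1+C_2\neq\frac32F_0$. The actual Zariski decomposition is $N=C_1$, $P=C_1+C_2=F_0$. This is integral, and $P$ is nef because it is a rational multiple of a fiber. Your sketch for $a\ge2$ is likewise not an argument: condition (a) only requires $a\mid C_i\cdot D$ for all curves $D$, and nothing you wrote forces this to fail.

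In fact this configuration cannot be excluded, because it occurs on surfaces with the IZD. So the statement as formulated, for an arbitrary fibration, is false, and no argument can close your gap. Take $X=\mathrm{Bl}_{p_1,p_2}\mathbb{P}^2$ with the fibration $X\to\mathbb{P}^1$ given by $|H-E_1|$. The fiber over the line through $p_1,p_2$ is $(H-E_1-E_2)+E_2$: two $(-1)$-curves meeting transversally once, which is exactly your case $a=m=1$. The only negative curves on $X$ are $E_1$, $E_2$, $H-E_1-E_2$, all $(-1)$-curves, so condition (a) of Theorem~\ref{Num-Thm} is automatic. The only intersecting pairs have intersection matrix $\begin{pmatrix}-1&1\\1&-1\end{pmatrix}$, which is not negative definite, so (b) holds. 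Hence $X$ has the IZD; it is a del Pezzo surface, as in Theorem~\ref{rational-thm}(1).

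Relative minimality does not rescue the statement either. Take a K3 surface with $\NS(X)\cong U(2)\oplus A_1$, which exists by Morrison. All intersection numbers are even, so negative definite configurations of $(-2)$-curves are disjoint and the coefficients $-\frac{D\cdot N_i}{2}$ are integers, giving the IZD. Yet the root spanning $A_1$ is orthogonal to an isotropic class of $U(2)$. After a Weyl-group translation making that class nef, some elliptic fibration therefore has a reducible fiber $C_1+C_2$ with $C_1\cdot C_2=2$.

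The paper gives no proof of this proposition, only the citation. What your argument does establish is the correct weaker statement: under the IZD, every reducible fiber has exactly two components, of equal multiplicity, with $C_1^2=C_2^2=-C_1\cdot C_2$. Irreducibility needs an extra hypothesis, for instance a Jacobian relatively minimal elliptic fibration. There a section $S$ forces $m=1$ and $S\cdot C_1=1$, so (a) gives $a=1$. But fiber components of such a fibration are $(-2)$-curves, so $a=2$, a contradiction.
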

\section{On elliptic surfaces with $\chi(\mathcal O_X)>0$}
\label{Sect-elliptic}
\begin{definition}
 Let $X$ be a smooth projective surface.
Let $\pi: X \to B$ be a surjective morphism with connected fibres.
We call $\pi: X\to B$ a fibration on the surface $X$ with the base curve $B$.
We say $X$ is an \emph{elliptic surface} if the general fiber of $\pi$ is an elliptic curve, and $\pi$ is \emph{relatively minimal} if no fiber contains a $(-1)$-rational curve.
 \end{definition}
For convenience,  all elliptic surfaces throughout this paper are assumed to be relatively minimal, and we will always exclude the trivial product case (cf. \cite[Example 5.6]{SS19})  by the following assumption:
\begin{assumption}
Any elliptic surface $\pi: X\to B$ is relatively minimal, and $\pi$ has a singular fibre.
Thus, we have that $\chi(\mathcal O_X)>0$ by \cite[Corollary 5.50]{SS19}.
\end{assumption}
 \begin{definition}
 Given an algebraic surface fibration $\pi: X\to B$, a \emph{section of $\pi$} is a morphism $\sigma: B \to X$ such that $\pi\circ\sigma$ is the identity map of $B$.
 An elliptic surface $\pi: X\to B$  is \emph{Jacobian} if $\pi$ has a section. 
 Let $E(K)$ be the Mordell-Weil group of $\pi: X\to B$, i.e., the group of sections, $O$ being the zero section.
 \end{definition}
 \begin{definition}
(cf. \cite[Section 2]{CD12})
\label{unnodal-Hal}
 A smooth rational projective surface $X$ is a \emph{Halphen surface} if there exists an integer $m>0$ such that the linear system $|-mK_X|$ is of dimension 1, has no fixed component, and has no base point.
 The \emph{index} of a Halphen surface is the smallest possible value for such a positive integer.
 A Halphen surface is \emph{unnodal} if it has no $(-2)$-rational curves.
 \end{definition}
 \begin{definition}
 \cite[Definition 2.3.1]{CDL25}
 An Enriques surface is \emph{unnodal} if it has no $(-2)$-rational curves.
 \end{definition}
 \begin{proposition}
\label{section-prop}
Let $X$ be an elliptic surface with $\chi(\mathcal O_X)>0$ and a section $C$.
If $X$ satisfies the IZD, then $\chi(\mathcal O_X)=-C^2=1$.
\end{proposition}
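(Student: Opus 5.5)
Recall the standard facts for a relatively minimal elliptic surface $\pi:X\to B$ with a section $C$. The section $C$ is a smooth curve isomorphic to $B$, and it has $C\cdot F=1$ for a fiber $F$. Since $\pi$ is Jacobian, there are no multiple fibers, so the canonical bundle formula gives $K_X\equiv(2g(B)-2+\chi(\mathcal O_X))F$. Adjunction then yields
$$2g(B)-2 = C^2 + K_X\cdot C = C^2 + 2g(B)-2+\chi(\mathcal O_X),$$
hence $C^2=-\chi(\mathcal O_X)<0$ because $\chi(\mathcal O_X)\ge1$ by the standing assumption. So $C$ is a negative curve, and Theorem \ref{Num-Thm}(a) applies to it.

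Take $D$ to be any irreducible fiber component meeting $C$; for instance a smooth fiber $F$, which is a curve. Then $C\cdot F=1$. By Theorem \ref{Num-Thm}(a), $C^2\mid (C\cdot F)=1$, so $|C^2|=1$. Therefore $-C^2=1$ and $\chi(\mathcal O_X)=-C^2=1$.

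The only step needing care is justifying $C^2=-\chi(\mathcal O_X)$, which rests on the canonical bundle formula and the absence of multiple fibers for Jacobian fibrations. This is standard, e.g. \cite[Corollary 5.45]{SS19} or the corresponding statement there. Using a general smooth fiber as $D$ avoids any issue with reducible fibers; alternatively one could invoke Proposition \ref{irre-prop}, but it is not needed.
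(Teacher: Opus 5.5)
Your proof is correct and follows the paper's argument: first $C^2=-\chi(\mathcal O_X)<0$ (the paper simply cites \cite[Corollary 5.45]{SS19}; you also derive it from adjunction and the canonical bundle formula with no multiple fibers). Then Theorem~\ref{Num-Thm}(a), applied with $D$ a smooth fiber so that $C\cdot F=1$, forces $C^2=-1$.
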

\begin{proof}
Note that $C^2=-\chi(\mathcal O_X)$ by \cite[Corollary 5.45]{SS19}.
Then by Theorem \ref{Num-Thm}(a), we have that $C^2=-1$ since $C$ is a section of the elliptic fibration.
\end{proof}
\begin{proof}[Proof of Theorem \ref{elliptic-thm}]
Let $\pi: X\to B$ be an elliptic fibration with  $\chi(\mathcal O_X)\ge1$ and $b(X)>0$.
Note that $q(X)=g(B)$ by \cite[Lemma 14 of Chapter 7]{Friedman98}, and  every fiber $F$ is irreducible by \cite[Proposition 1.2]{Li26}.
Then  every negative curve $C$ does not lie in any fiber.
Let $m_1F_1,\cdots, m_sF_s$ be the multiple fibers of $\pi$.
By Kodaira's formula for the canonical bundle of an elliptic surface (cf. \cite[Corollary V.12.3]{BHPV04} or \cite[Theorem 15 of Chapter 7]{Friedman98}), we have 
$$K_X=\delta_X F, \quad \delta_X=2g(B)-2+\chi(\mathcal O_X)+\sum(1-1/m_i).$$

 If $\delta_X<0$, then $\kappa(X)=-\infty$, $h^0(\mathcal O_X(2K_X))=0$ and $q(X)=g(B)=0$.
So $X$ is  rational by the Castelnuovo rationality criterion (cf. \cite[Theorem 2 of Chapter 10]{Friedman98}).
By \cite[Lemma 5.1.2.2]{ADHL15}, $-K_X$ is semiample since $\kappa(X,-K_X)=\kappa(X,F)=1$.
Note that $\chi(\mathcal O_X)=1$ and $\rho(X)=10$ by \cite[Proposition 7.1]{SS19}.
Take a negative curve $C$ on $X$.
$-K_X$ nef and $C^2<0$ imply that $C$ is a $(-1)$-rational curve or $(-2)$-rational curve.
Since a $(-2)$-rational curve $C$ satisfies $C\cdot K_X=0$, then it must be an irreducible component of a reducible fiber by Zariski's lemma.
This is absurd since every fiber is irreducible.
Therefore, every negative curve is a $(-1)$-rational curve.
As a result, if $\pi$ is not Jacobian, then $X$ is a unnodal Halphen surface of index $m\ge2$.
Now we assume that $\pi$ is Jacobian.
If $\mathrm{MW}(\pi)$ is finite, then $X$ is an extremal Jacobian elliptic surface.
Then $\pi$ has at least  a reducible fiber by the classification result  (\cite{MP86}, see also \cite[Theorem 5.1.4.2(2)]{ADHL15}).
This is absurd since $\pi$ has no reducible fiber.
If $\mathrm{MW}(\pi)$ is infinite, then  $E(K)\cong E_8$ by \cite[Corollary 6.7 and Theorem 7.4(i)]{SS19}.
Now we may assume that $\pi$ is not Jacobian.
Then $X$ is a unnodal Halphen surface of index $m\ge2$.

 If $\delta_X=0$, then $K_X\equiv0$ and $C$ is a $(-2)$-rational curve.
By the Enriques-Kodaira classification (cf. \cite[Chapter VI, Table 10, p. 244]{BHPV04}), $X$ is a K3 surface, an Enriques  surface, an abelian surface or a bi-elliptic surface.
Therefore, $X$ is an Enriques surface or  a K3 surface  since abelian surfaces and bi-elliptic surfaces have no any rational curves.
If $X$ is an Enriques surface, then $X$ admits an elliptic fibration with a reducible fiber provided that $X$ contains a $(-2)$-rational curve by \cite[Theorem 4]{Cossec85}.
Thus, $X$ is a unnodal Enriques surface.
Now we assume that $X$ is a K3 surface.
Note that $\chi(O_X)=2$.
Therefore, $X$ is an elliptic K3 surface with no section by Proposition \ref{section-prop}.

 If $\delta_X>0$, then $\kappa(X)=1$.
 If $\pi$ is Jacobian, then $\chi(\mathcal O_X)=1$ by Proposition \ref{section-prop}.
 If $\pi$ is not Jacobian, then $X$ is an elliptic surface with $\kappa(X)=1$ and no  section.
\end{proof}
\section{Proof of Theorem \ref{K3-thm}}
\label{Sect-K3}
\subsection{Lattices}
In the context of K3 surfaces,  a \emph{lattice} is a finitely generated free abelian group $\Lambda$ equipped with an integer symmetric bilinear form $\beta\colon \Lambda \times \Lambda \to \mathbb Z$;
we will mostly write $w \cdot w' := \beta(w, w')$ and $w^2 := \beta(w, w)$. Such a lattice $\Lambda$ is called
\begin{enumerate}
    \item Nondegenerate if $\beta$ is of rank $\operatorname{rk}(\beta) = \operatorname{rk}(\Lambda)$,
    \item Even if $w^2 \in 2\mathbb Z$ holds for all $w \in \Lambda$,
    \item Hyperbolic if $\beta$ has signature $(1, \varrho - 1)$ where $\varrho = \operatorname{rk}(\beta)$.
\end{enumerate}
 There is the lattice $U = \mathbb Z^2$ of rank $2$ with the intersection form given by the Gram matrix
\[
\begin{pmatrix}
0 & 1 \\
1 & 0
\end{pmatrix}.
\]
Moreover, there are the lattices $A_n$, $D_n$, $E_6$, $E_7$, and $E_8$. They are characterized by means of their Coxeter-Dynkin graphs: the vertices are the simple roots of the lattice, that is, the finitely many elements $w$ with $w^2 = -2$ and two roots $w, w'$ are joined by an edge if $w \cdot w' = 1$ (if they are not joined then $w \cdot w' = 0$ holds).

~~
\tikzset{
    dyn dot/.style={circle, fill, inner sep=1.2pt},
    dyn solid/.style={solid, line width=0.5pt},
    dyn dash/.style={dashed, line width=0.5pt},
    dyn baseline/.style={baseline=(current bounding box.base)}
}

\begin{tikzpicture}[dyn baseline, scale=0.85]
\node[anchor=east] at (-0.6, 2.0) {$A_n\colon$};
\node[dyn dot] (A1) at (0, 2.0) {};
\node[dyn dot] (A2) at (1.0, 2.0) {};
\node[dyn dot] (A3) at (3.6, 2.0) {};
\node[dyn dot] (A4) at (4.6, 2.0) {};
\draw[dyn solid] (A1)--(A2);
\draw[dyn dash] (A2)--(A3);
\draw[dyn solid] (A3)--(A4);
\node[anchor=west] at (6, 2.0) {$n\ge 1$};

\node[anchor=east] at (-0.6, 0) {$D_n\colon$};
\node[dyn dot] (D1) at (0, 0) {};
\node[dyn dot] (D2) at (1.0, 0) {};
\node[dyn dot] (D3) at (3.6, 0) {};
\node[dyn dot] (D4) at (4.6, 0) {};
\node[dyn dot] (Dup) at (5.4, 0.95) {};
\node[dyn dot] (Ddn) at (5.4, -0.95) {};
\draw[dyn solid] (D1)--(D2);
\draw[dyn dash] (D2)--(D3);
\draw[dyn solid] (D3)--(D4);
\draw[dyn solid] (D4)--(Dup);
\draw[dyn solid] (D4)--(Ddn);
\node[anchor=west] at (6, 0) {$n\ge 4$};

\def\xshift{9.2}
\node[anchor=east] at (\xshift, 2.0) {$E_6\colon$};
\node[dyn dot] (E6a) at (\xshift+0.8, 2.0) {};
\node[dyn dot] (E6b) at (\xshift+1.8, 2.0) {};
\node[dyn dot] (E6c) at (\xshift+2.8, 2.0) {};
\node[dyn dot] (E6d) at (\xshift+3.8, 2.0) {};
\node[dyn dot] (E6e) at (\xshift+4.8, 2.0) {};
\node[dyn dot] (E6v) at (\xshift+2.8, 0.95) {};
\draw[dyn solid] (E6a)--(E6b)--(E6c)--(E6d)--(E6e);
\draw[dyn solid] (E6c)--(E6v);

\node[anchor=east] at (\xshift, 0) {$E_7\colon$};
\node[dyn dot] (E7a) at (\xshift+0.8, 0) {};
\node[dyn dot] (E7b) at (\xshift+1.8, 0) {};
\node[dyn dot] (E7c) at (\xshift+2.8, 0) {};
\node[dyn dot] (E7d) at (\xshift+3.8, 0) {};
\node[dyn dot] (E7e) at (\xshift+4.8, 0) {};
\node[dyn dot] (E7f) at (\xshift+5.8, 0) {};
\node[dyn dot] (E7v) at (\xshift+2.8, -0.95) {};
\draw[dyn solid] (E7a)--(E7b)--(E7c)--(E7d)--(E7e)--(E7f);
\draw[dyn solid] (E7c)--(E7v);

\node[anchor=east] at (\xshift, -2.0) {$E_8\colon$};
\node[dyn dot] (E8a) at (\xshift+0.8, -2.0) {};
\node[dyn dot] (E8b) at (\xshift+1.8, -2.0) {};
\node[dyn dot] (E8c) at (\xshift+2.8, -2.0) {};
\node[dyn dot] (E8d) at (\xshift+3.8, -2.0) {};
\node[dyn dot] (E8e) at (\xshift+4.8, -2.0) {};
\node[dyn dot] (E8f) at (\xshift+5.8, -2.0) {};
\node[dyn dot] (E8g) at (\xshift+6.8, -2.0) {};
\node[dyn dot] (E8v) at (\xshift+2.8, -3.0) {};
\draw[dyn solid] (E8a)--(E8b)--(E8c)--(E8d)--(E8e)--(E8f)--(E8g);
\draw[dyn solid] (E8c)--(E8v);
\end{tikzpicture}

~~

Given two lattices $\Lambda_1$, $\Lambda_2$, we denote by $\Lambda_1 \oplus \Lambda_2$ the direct sum lattice and by $n\Lambda$ the $n$-fold direct sum of $\Lambda$. 
Moreover, for a lattice $\Lambda$ and an integer $n$, we write $\Lambda(n)$ for $\Lambda$ with the stretched product $n\langle~, ~ \rangle$. Specially, we denote by $(n)$ the lattice $\mathbb Z$ with $a \cdot b = nab$.
\begin{definition}
\label{gram-defn}
For a symmetric matrix $(a_{ij})$, we use its lower left entries $$(a_{11}, a_{21},a_{22},\cdots, a_{nn})$$ to denote the lattice with Gram matrix equal to it.
For example, by $(-6,0,-2,0,4,-4)$ we mean the rank 3 lattice with Gram matrix
$\begin{pmatrix}
6 & 0 & 0 \\
0 & -2 & 4\\
0 & 4 & -4 \\
\end{pmatrix}.$
\end{definition}
\subsection{Dynamical degrees}
Let $f: X\to  X$ be a surjective endomorphism of an $n$-dimensional normal projective variety $X$. 
Then the {\it first dynamical degree} of $f$ (cf. \cite{Dang20, DS05, Truong20}) is defined as 
$$
 d_1(f):=\lim_{s\to\infty}((f^s)^*H\cdot H^{n-1})^{1/s},
$$
where $H$ is an ample Cartier divisor on $X$.
Dinh and Sibony's result states that this limit exists and is independent of the choice of the ample divisor $H$.
We say $f$ is of positive entropy if $d_1(f)>1$, otherwise $f$ is of zero entropy.
In addition, we say the automorphism group $\Aut(X)$ of $X$ is of zero entropy if every automorphism is of zero entropy.

Now we recall a characterization of positive entropy of automorphism of K3 surfaces.
\begin{theorem}
(cf. \cite[Theorem 1.4]{Oguiso07})
Let $X$ be a K3 surface, $G\le \Aut(X)$ and $g\in \Aut(X)$.
Then the following statements hold.
\begin{enumerate}
	\item $G$ is of zero entropy if and only if $G$ is finite or $G$ makes an elliptic fibration $\pi: X\to\bP^1$ stable.
\item $g$ is of positive entropy if and only if $g$ has a Zariski dense orbit, i.e., there is a point $x\in X$ such that the set $\{g^n(x)| n\in\bZ\}$ is Zariski dense in $X$.
\end{enumerate}
\end{theorem}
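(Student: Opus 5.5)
Since this theorem is quoted from \cite{Oguiso07}, the plan is to reconstruct its proof rather than invoke it. The basic tool is the action $g\mapsto g^*$ of $\Aut(X)$ on $\NS(X)$. This is an isometry of a hyperbolic lattice, it has finite kernel (an automorphism fixing an ample class lies in a finite group), and $d_1(g)$ equals the spectral radius of $g^*$ on $\NS(X)\otimes\mathbb R$. By the standard trichotomy for isometries of hyperbolic space, $g^*$ is elliptic (finite order on $\NS$), parabolic (quasi-unipotent with an isotropic fixed ray), or loxodromic (spectral radius $\lambda>1$). In the loxodromic case there are nef isotropic eigenvectors $v_\pm$ with eigenvalues $\lambda^{\pm1}$, and every other eigenvalue has modulus $1$. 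Hence $g$ has positive entropy exactly when $g^*$ is loxodromic.

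\emph{Step 1: the ``if'' direction of (1).} If $G$ is finite, each $g^*$ has finite order, so every eigenvalue is a root of unity and $d_1(g)=1$. Now suppose $G$ preserves an elliptic fibration, so each $g^*$ fixes the fiber class $F$, which is nef and isotropic. If some $g^*$ were loxodromic, then $F\cdot v_+=F\cdot g^*v_+=\lambda\,F\cdot v_+$ would force $F\cdot v_+=0$. Two nef isotropic classes that are orthogonal are proportional, by the light cone lemma. So $F$ would be proportional to $v_+$, contradicting that $g^*F=F$ while $\lambda\ne1$.

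\emph{Step 2: the ``only if'' direction of (1).} This is the main obstacle. Suppose $G$ has no loxodromic elements. A group of hyperbolic isometries with no loxodromic elements either has a bounded orbit or fixes a unique boundary point; I would use this rather than proving it.
\begin{itemize}
\item Bounded orbit: averaging gives a $G$-invariant class in the positive cone, and it can be moved into the ample cone. Then $G$ preserves an ample class, so $G$ is finite.
\item Fixed boundary point: I would show the fixed isotropic ray is rational. Take a non-elliptic $g\in G$, so some power $g^{*k}$ is unipotent; the kernel of the integral matrix $g^{*k}-1$ contains the fixed isotropic line, and this line is defined over $\bQ$. Since $G$ preserves the nef cone, the ray is nef. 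By Piatetski-Shapiro--Shafarevich, the primitive nef isotropic class $F$ on it is the class of an elliptic pencil $|F|$, and $G$ preserves this fibration.
\item If every element of $G$ is elliptic but $G$ is infinite, I would use Burnside's theorem on torsion linear groups of bounded exponent to reduce to one of the two cases above.
\end{itemize}

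\emph{Step 3: part (2).} If $g$ has zero entropy, apply (1) to $G=\langle g\rangle$. Either $g$ has finite order, or some power of $g$ preserves each fiber set of an elliptic fibration, so orbits lie in finitely many fibers; in neither case is an orbit Zariski dense. Conversely, suppose $g$ has positive entropy.
\begin{itemize}
\item $g$-periodic curves: any such curve $C$ satisfies $[C]\perp v_\pm$, so $[C]$ lies in the negative definite lattice $\langle v_+,v_-\rangle^\perp\cap\NS(X)$. There are only finitely many such curves: they are all $(-2)$-curves spanning a negative definite configuration, which can have only finitely many members.
\item Periodic points: the fixed locus of each $g^n$ is finite away from these curves, so the periodic points form a countable set.
\end{itemize}
Now take $x$ very general, outside the finitely many periodic curves and the countably many periodic points. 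The Zariski closure of its orbit is $g$-invariant. If it were proper, its irreducible components would be finitely many periodic points or periodic curves, and $x$ would lie on one of them, which is impossible. Hence the orbit of $x$ is Zariski dense.
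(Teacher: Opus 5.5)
The paper does not prove this statement; it quotes it from Oguiso as background, so your reconstruction has to stand on its own. Its architecture is the standard one and is sound: spectral radius of $g^*$ on $\NS(X)_{\mathbb{R}}$, the elliptic/parabolic/loxodromic trichotomy, Burnside and Piatetski-Shapiro--Shafarevich for (1), and finiteness of periodic curves for (2). You take $X$ projective, which matches the paper's definition of $d_1$ via an ample divisor.

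The one real gap is in the first half of Step 3, which needs a K3-specific input. Part (1) only says that $g$ permutes the fibres of $\pi$, i.e.\ induces some $\bar g\in\Aut(\mathbb{P}^1)$. Your claim that ``some power of $g$ preserves each fibre'' means $\bar g$ has finite order, and this is not formal. On the rational elliptic surface $y^2=x^3+t$ (singular fibres $II$, $II^*$ only), the map $(x,y,t)\mapsto(\lambda^2x,\lambda^3y,\lambda^6t)$ preserves the fibration but acts on the base with infinite order, so its orbits meet infinitely many fibres.

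For a K3 you must add that $\bar g$ permutes the critical values of $\pi$, and that there are at least three of them:
\begin{itemize}
\item If $j$ is constant, each singular fibre has Euler number at most $10<24/2$, so one or two of them cannot reach $e(X)=24$.
\item If $j$ is non-constant, Riemann--Hurwitz for $j\colon\mathbb{P}^1\to\mathbb{P}^1$, using $3\mid v_p(j)$ and $2\mid v_p(j-1728)$ at smooth fibres, rules out having at most two singular fibres.
\end{itemize}
Then a power of $\bar g$ fixes three points and is therefore the identity.

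There are also two smaller points in Step 2.

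First, rationality of the fixed ray. With $u=g^{*k}$ unipotent and $u\ne 1$, the rational space $\ker(u-1)$ has dimension $\rho-2$, so containing the isotropic line $\mathbb{R}e$ does not by itself make that line rational. Note that $\ker(u-1)\subset e^\perp$: otherwise $u$ fixes a class of positive square, hence has finite order, hence $u=1$. So $\mathbb{R}e$ is the radical of $\ker(u-1)$; equivalently $\mathbb{R}e=\mathrm{Im}(u-1)^2$, which is defined over $\bQ$.

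Second, nefness of the fixed ray. ``$G$ preserves the nef cone'' does not by itself make a fixed boundary ray nef. What works is that
$$u^na=a+n(u-1)a+\binom n2(u-1)^2a$$
converges projectively to $\mathbb{R}_{>0}e$ for every ample $a$, since $\ker(u-1)^2=e^\perp$ and $a\cdot e>0$. Hence $e$ is a limit of ample classes.

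In the bounded-orbit case, discreteness makes the orbit of an integral ample class finite, so you can simply sum it rather than ``average''. With these additions the argument is complete.
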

\subsection{Proof of Theorem \ref{K3-thm}}
\begin{lemma}
\label{num-K3-lem}
Let $X$ be projective K3 surface with $b(X)>0$.
Then $X$ satisfies the IZD if and only if the following statements hold.
\begin{itemize}
\item[(a)]	for every $(-2)$-curve $C$ and every curve $D(\ne C)$, we have $C\cdot D\in2\bZ_{\ge0}$.
\item[(b)] for any two  curves $C_1$ and $C_2$, if $I(C_1,C_2)$ is negative definite, then $(C_1\cdot C_2)=0$.
\end{itemize}
\end{lemma}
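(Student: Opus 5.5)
This is a direct specialization of Theorem \ref{Num-Thm} to K3 surfaces. The plan is to identify the negative curves on a projective K3 surface and then check that condition (a) of Theorem \ref{Num-Thm} becomes condition (a) of the lemma. Condition (b) is literally the same in both statements, so it needs nothing.

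\emph{Step 1: the negative curves.} By adjunction with $K_X\sim 0$, an irreducible curve $C$ has $C^2=2p_a(C)-2\ge -2$. Since $\NS(X)$ is even, a negative curve has $C^2=-2$, and then $p_a(C)=0$, so $C$ is a smooth rational $(-2)$-curve. Hence the negative curves on $X$ are exactly the $(-2)$-curves, and condition (a) of Theorem \ref{Num-Thm} reads: $2\mid (C\cdot D)$ for every $(-2)$-curve $C$ and every curve $D$.

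\emph{Step 2: matching the two versions of (a).} The pair $D=C$ is harmless, since $C\cdot C=-2$ is always divisible by $C^2=-2$; this is why the lemma restricts to $D\ne C$. For distinct irreducible curves, $C\cdot D\ge 0$ automatically. So ``$2\mid C\cdot D$ for all curves $D$'' is equivalent to ``$C\cdot D\in 2\bZ_{\ge0}$ for all curves $D\ne C$''.

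\emph{Step 3: conclusion.} Both directions of the lemma now follow by applying Theorem \ref{Num-Thm} verbatim. The hypothesis $b(X)>0$ only guarantees that conditions (a) and (b) are not vacuous; the equivalence holds regardless.

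I expect no step to be a real obstacle. The one point needing care is Step 1: the exclusion of curves with $C^2=-1$ must be justified by the evenness of the intersection form on a K3 surface (equivalently, by adjunction with $K_X=0$), and this should be stated explicitly.
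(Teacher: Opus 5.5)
Your proposal is correct and follows the paper's proof: adjunction with $K_X\equiv 0$ shows every negative curve is a $(-2)$-rational curve, and the lemma is then Theorem \ref{Num-Thm} specialized to K3 surfaces. Your Step 2 spells out the translation of $C^2\mid (C\cdot D)$ into $C\cdot D\in 2\bZ_{\ge0}$ for $D\ne C$. The paper leaves that translation implicit.
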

\begin{proof}
Let $C$ be a negative curve.
Note that $K_X\equiv 0$.
Then $K_X\cdot C=0$ and $C^2<0$ implies that $C$ is a $(-2)$-rational curve by the adjunction formula.
So the proof follows from Theorem \ref{Num-Thm}.
\end{proof}
\begin{proposition}
\label{even-lattice-prop}
Let $X$ be a projective K3 surface with $\rho(X)\ge2$.
Then $X$ satisfies the IZD  provided that $\NS(X)$ is restricted fully even.
\end{proposition}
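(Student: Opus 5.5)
We verify the two conditions of Lemma \ref{num-K3-lem}. If $X$ has no negative curve the IZD is trivial, since every pseudoeffective divisor is then nef. So assume $b(X)>0$. By adjunction, as in the proof of Lemma \ref{num-K3-lem}, every negative curve on $X$ is a smooth rational $(-2)$-curve.

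\emph{Condition (a).} Since all entries of the Gram matrix of $\NS(X)$ are even, the bilinear form takes only even values on $\NS(X)$. In particular $C\cdot D\in 2\bZ$ for all classes $C,D$. If $C$ is a $(-2)$-curve and $D\neq C$ is a curve, then $C\cdot D\ge 0$ because distinct irreducible curves meet nonnegatively. Hence $C\cdot D\in 2\bZ_{\ge0}$, and condition (a) holds with no further work.

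\emph{Condition (b).} Let $C_1\ne C_2$ be curves with $I(C_1,C_2)$ negative definite. Then $C_1^2,C_2^2<0$, so both are $(-2)$-curves and
\[
I(C_1,C_2)=\begin{pmatrix}-2 & a\\ a & -2\end{pmatrix},\qquad a=C_1\cdot C_2\in 2\bZ_{\ge0}.
\]
Negative definiteness forces $4-a^2>0$, so $a\in\{0,1\}$, and evenness gives $a=0$. So condition (b) also follows from the evenness of the form alone.

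\emph{Where the remaining hypotheses enter.} The argument uses only the ``all Gram entries even'' part of Definition \ref{AFE-defn}. The other restrictions (no summands $U$, $A_n$ with $n\ge2$, $D_n$, $E_6,E_7,E_8$, and not isometric to $U(k)\oplus lA_1\oplus L$) should be irrelevant here, and are presumably needed only for the converse direction and the classification in Theorem \ref{K3-thm}(ii). The main thing to check is that Definition \ref{AFE-defn} really means every entry of the Gram matrix is even, so that $(x,y)\mapsto x\cdot y$ is divisible by $2$ for all $x,y$. It is not enough for the lattice to be merely even, i.e.\ $x^2\in 2\bZ$, which holds on every K3 surface. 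With divisibility of the whole form by $2$, Lemma \ref{num-K3-lem} yields the IZD. The hypothesis $\rho(X)\ge2$ is not needed by this argument.
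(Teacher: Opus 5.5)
Your argument is correct and is essentially the paper's: divisibility of the whole form by $2$ gives condition (a) of Lemma~\ref{num-K3-lem}, and (b) then follows, so only the ``all Gram entries even'' clause of Definition~\ref{AFE-defn} is actually used, as you observe. The differences are minor: you check (b) directly via $\det I(C_1,C_2)=4-a^2>0$ with $a$ even, whereas the paper cites the proof of Proposition~2.1 of \cite{Li26} for (a)$\Rightarrow$(b), and you also treat the case $b(X)=0$, which Lemma~\ref{num-K3-lem} formally excludes.
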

\begin{proof}
Since $\NS(X)$ is restricted fully even, the intersection number of any two generator elements of $\NS(X)$ is an even integer.
As a result, for every $(-2)$-curve $C$ and every curve $D(\ne C)$, we have $C\cdot D\in2\bZ_{\ge0}$.
We note that (a) of  Lemma \ref{num-K3-lem} implies (b) of Lemma \ref{num-K3-lem} by \cite[Proof of Proposition 2.1]{Li26}.
Then $X$ satisfies the IZD by Lemma \ref{num-K3-lem}.
\end{proof}
\begin{proposition}
\label{odd-prop}
Let $X$ be a projective K3 surface.
Let $\NS(X)$ be isometric to $$(a_{11},a_{21}, a_{22},\cdots, a_{nn})$$
with a basis $e_1,\cdots, e_n$.
Then the following statements hold.
\begin{enumerate}
	\item If  $a_{ii}=-2$, then either $e_i$ or $-e_i$ is a $(-2)$-curve.
	\item If   $a_{ii}=-2$ and $a_{ij}$ is an odd integer with $i\ne j$, then $X$ does not satisfy the IZD.
	\item Suppose  $\NS(X)$ is isometric to $U(k)\oplus A_1\oplus L$.
	Then $X$ does not satisfy the IZD.
\end{enumerate}
\end{proposition}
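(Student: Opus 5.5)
For part (1) I will use Riemann--Roch on the K3 surface $X$. For a divisor class $D$ with $D^2=-2$ we have $\chi(\mathcal O_X(D))=2+D^2/2=1$, so $h^0(D)+h^0(-D)\ge 1$, and hence $e_i$ or $-e_i$ is effective. To pass from an effective class to a $(-2)$-curve I will use the standard reflection reduction. Suppose $D$ is effective, $D^2=-2$, and $D$ is not irreducible. Then $D^2<0$ forces some component $C$ with $D\cdot C<0$, and this $C$ has $C^2<0$, so it is a $(-2)$-curve (Lemma \ref{num-K3-lem}). The reflection $s_C(D)=D+(D\cdot C)C$ is again an effective $(-2)$-class with strictly smaller degree against a fixed ample $H$. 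Iterating terminates, so $D=w(C_0)$ for a $(-2)$-curve $C_0$ and some $w$ in the group generated by reflections in $(-2)$-curves. If $e_i$ is not itself irreducible, I will read statement (1) through this reduction, replacing the basis by its image under $w^{-1}$. I expect this step, checking that the reflected class stays effective and that the process really ends at a curve, to be the main technical point. If a cleaner argument is needed, I will cite the standard lemma for K3 surfaces (every effective $(-2)$-class lies in the Weyl orbit of a $(-2)$-curve).

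For part (2) I will argue by contradiction. Assume $X$ satisfies the IZD. Lemma \ref{num-K3-lem}(a) gives $C\cdot D$ even for every $(-2)$-curve $C$ and every curve $D\ne C$, and $C\cdot C=-2$ is even as well. Every class in $\NS(X)$ is a difference of effective divisors, since $mH+x$ is effective for $m\gg0$. Hence $C\cdot x\in 2\bZ$ for all $x\in\NS(X)$. Consequently each reflection $s_C$ satisfies $s_C(y)\cdot x\equiv y\cdot x \pmod 2$ for all $x,y$. Applying the reduction from part (1) to $\pm e_i$ yields a $(-2)$-curve $C_0=w(\pm e_i)$ with
$$C_0\cdot e_j\equiv e_i\cdot e_j=a_{ij}\equiv 1 \pmod 2,$$
which contradicts $C_0\cdot x\in 2\bZ$.

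For part (3), let $f,g$ be the standard basis of $U(k)$, so $f^2=g^2=0$ and $f\cdot g=k$, and let $e$ generate $A_1$, so that $e^2=-2$ and $e\perp f$. The class $f$ is primitive and isotropic. Up to sign and the action of the reflections in $(-2)$-curves, it maps to a nef primitive isotropic class $E=w(\pm f)$. By the standard theory of K3 surfaces, $|E|$ is an elliptic fibration $\pi\colon X\to\bP^1$. Then $w(e)$ has square $-2$ and is orthogonal to $E$, so by Riemann--Roch $\pm w(e)$ is effective. An effective divisor orthogonal to the fiber class is supported in fibers, and being of negative square it cannot be a sum of full fibers. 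Hence some fiber of $\pi$ is reducible. This contradicts Proposition \ref{irre-prop}, so $X$ does not satisfy the IZD.
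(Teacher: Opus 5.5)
Your proposal is correct. It follows the same outline as the paper: Riemann--Roch for (1), a parity contradiction with Lemma \ref{num-K3-lem}(a) for (2), and for (3) an elliptic fibration one of whose fibres must contain the $A_1$ class. Your Weyl-group reduction, however, is a real addition that the paper's proof lacks, and it is needed.

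\textbf{Part (1).} The paper proves (1) by asserting that Riemann--Roch makes $e_i$ or $-e_i$ a curve. Riemann--Roch only gives effectivity, and (1) is in fact false as literally stated.
A $(-2)$-class is primitive, so a $(-2)$-curve $C$ can occur as $e_i$ in some basis. Suppose $C'$ is another $(-2)$-curve with $C\cdot C'=1$. The isometry $s_{C'}$ carries that basis to one with the same Gram matrix and $e_i=C+C'$. Since $(C+C')\cdot C<0$, the only member of $|C+C'|$ is the reducible divisor $C+C'$, so neither $e_i$ nor $-e_i$ is a curve.
So the version you actually prove is the right one: after composing the isometry with $\pm w$, where $w$ lies in the group generated by reflections in $(-2)$-curves, $e_i$ is a curve.

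\textbf{Part (2).} The paper begins ``we may assume $e_i$ is a $(-2)$-curve'' and then applies Lemma \ref{num-K3-lem}(a) to $e_j$, which is only a class, not necessarily a curve. Two of your observations close both holes:
\begin{enumerate}
\item under the IZD, a $(-2)$-curve pairs evenly with every class in $\NS(X)$, because every class is a difference of effective divisors;
\item consequently, reflections in $(-2)$-curves preserve all intersection numbers mod $2$.
\end{enumerate}
Equivalently, you could replace the whole basis by its image under $w^{-1}$, which leaves the Gram matrix unchanged.

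\textbf{Part (3).} The paper takes the $A_1$ generator to be a curve orthogonal to the fibre class of the fibration obtained from $U(k)$. It does not note that producing that fibration may require moving the isotropic vector by reflections. Transporting $f$ and $e$ by the same $w$, as you do, is what makes the argument valid.

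\textbf{The step you flagged in (1).} Let $D\neq C$ be effective with $D^2=-2$ and $D\cdot C<0$. Write $D=nC+D'$, where $D'\neq 0$ is effective and does not contain $C$.
\begin{enumerate}
\item If $s_C(D)$ were not effective, Riemann--Roch would make $-s_C(D)=mC-D'$ effective, where $m=n-D'\cdot C$.
\item This forces $m>0$ and $D'+E\in|mC|$ for some effective $E$.
\item But $|mC|=\{mC\}$ for a $(-2)$-curve, which would force $D'$ to be supported on $C$, a contradiction.
\end{enumerate}
So $s_C(D)$ is effective. Together with $H\cdot s_C(D)<H\cdot D$, this gives the termination you need.
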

\begin{proof}
 (1). By Riemann-Roch theorem, either $e_i$ or $-e_i$ is a curve since $e_i^2\ge-2$.

(2). Now we may assume that $e_i$ is a $(-2)$-curve and $e_i\cdot e_j$ is an odd integer.
Thus, $X$ does not satisfy the IZD by Lemma \ref{num-K3-lem}.

(3). 
Suppose \(\NS(X)\cong U(k)\oplus A_1\oplus L\).
 By \cite[Proposition 11.1.3]{Huybrechts16},  \(\NS(X)\) contains a primitive isotropic vector, hence $X$ admits an elliptic fibration \(\pi\colon X\to\mathbb P^1\). 
 Let $D$ be the \((-2)\)-rational curve corresponding to the summand \(A_1\). 
Then $D\cdot F=0$, where $F$ is a fiber of $\pi$.
By Zariski's lemma, $D$ must be an irreducible component of a reducible fiber.
This is absurd by Proposition \ref{irre-prop}.
Thus, $X$ does not satisfy the IZD.
\end{proof}
\begin{proof}[Proof of Theorem \ref{K3-thm}]

(i).  We  assume that $X$ satisfies the IZD.
If $\rho(X)\ge12$, then there exists an embedding $U\hookrightarrow	\NS(X)$ by \cite[Corollary 13.3.8]{Huybrechts16}.
Then there exists a Jacobian elliptic fibration by \cite[Lemma 3.1(i)]{Kondo89}.
Note that a section $C$ has $C^2=-2$.
This is absurd by Lemma \ref{num-K3-lem}.
Thus, $\rho(X)\le 11$.

(iii) the proof  follows from  Proposition \ref{even-lattice-prop}.

(ii) The proof of sufficiency for (ii) follows from Proposition \ref{even-lattice-prop}.

Now suppose that $X$ satisfies the IZD.
Note that $\Aut(X)$ is finite if and only if $X$ is a Mori dream surface (cf. \cite[Theorem 5.1.5.1]{ADHL15}).
If $\Aut(X)$ is finite,  by \cite[Theorem 5.1.5.3]{ADHL15}, We have a classification of Mori dream K3 surfaces.
If $\Aut(X)$ is infinite and every automorphism is of zero entropy, we  have a  classification as in \cite[Appendix A]{Yu25}.

We go through the complete list of zero-entropy N\'eron-Severi lattices given in \cite[Theorem 5.1.5.3]{ADHL15} and \cite[Appendix A]{Yu25}, and filter them using the IZD criteria from Proposition \ref{odd-prop}.
This allows us to rule out lattices containing $U$, $U(k)\oplus lA_1$ ($k,l>0$), $A_n$ ($n\ge 2$), $D_n$, $E_6$, $E_7$, or $E_8$ as orthogonal direct summands, as well as lattices  admitting a $(-2)$-rational curve whose intersection with some divisor is odd by Proposition \ref{odd-prop}.
Let $e_1,e_2,e_3,\dots,e_{\rho(X)}$ be a basis for the N\'eron-Severi lattice $N$. We then sequentially check each Picard number $\rho(X)\in [3,11]$ to obtain our final classification.

Now we assume that $\rho(X)=3$.
If $\Aut(X)$ is finite,  by \cite[Theorem 5.1.5.3]{ADHL15}, $N$ is one of the following lattices:
$$(2e_1+e_3, e_2, 2e_3), \quad (ke_1, e_2, e_3), k\in\{4,5,6,7,8, 10,12\}, $$ $$
(e_1, ke_2, e_3), k\in \{2,3,4,5,6,9\}, \quad (e_1,e_2,ke_3), k\in\{1,2,3,4,6,8\},$$
where the intersection matrix of $e_1,e_2,e_3$ is $(-2,0,-2,1,2,-2)$ (see Definition \ref{gram-defn} for this notation) and $(6)\oplus 2A_1$.
Note that $(6)\oplus 2A_1$ is restricted fully even and 
\begin{enumerate}
   \item  $(6)\oplus 2A_1$ is restricted fully even,
	\item  $I(2e_1+e_3, e_2, 2e_3)=(-6,2,-2,0,4,-8)$  is restricted fully even.
	\item $I(ke_1,e_2,e_3)=(-2k^2,0, -2, k,2,-2)$  is  restricted fully even if and only if $k\in \{4,6,8,10,12\}$.
\item  $I(e_1,e_2, ke_3)=(-2,0,-2,k,2k,-2k^2)$ is restricted fully even if and only if $k\in\{2,4,6,8\}$.
\end{enumerate}
By Proposition \ref{odd-prop}, we note that $X$ does not satisfy the IZD provided that  $N$ is one of the following lattices:
\begin{itemize}
	\item[(a)] $I(ke_1, e_2, e_3)=(-5k^2, 0, -2, k, 2,-2)$, $k\in\{5, 7\}$,
	\item[(b)] $I(e_1,ke_2,e_3)=(-2,0, -2k^2,1, 2k,-2)$, $k\in\{2,3,4,5,6,9\}$,
	\item[(c)] $I(e_1,e_2,ke_3)=(-2,0,-2, k, 2k,-2k^2)$, $k\in \{1,3\}$,
\end{itemize}
since $(e_1\cdot e_3)$ is an odd integer in all cases (a), (b) and (c).

If  $\Aut(X)$ is infinite, by \cite[Appendix A]{Yu25}, $N$ is one of the following cases:
$$
(2)\oplus A_1\oplus A_1(2), (2)\oplus A_1\oplus A_1(4), (16)\oplus 2A_1,(8)\oplus A_1\oplus A_1(4),$$
(which are restricted fully even), and 
$$ (0, 2,2,0,1, -14), (0,3,4)\oplus A_1(9).
$$
Thus, it suffices to show that $X$ does not satisfy the IZD provided that $N$ is either $(0,2,2,0,1,-14)$ or $(0,3,4)\oplus A_1(9)$.

We  assume that $N=(0, 2,2,0,1, -14)$.
Note that $$e_1^2=e_1\cdot e_3=0, e_1\cdot e_2=2,e_2^2=2,e_2\cdot e_3=1,  e_3^2=-14.$$
Take a $(-2)$-curve $r=ae_1+be_2+ce_3, a,b,c\in\bZ$.
$r^2=-2$ implies that $4ab+2b^2+2bc-14c^2=-2$.
As a result, $2ab+b^2+bc-7c^2=-1$.
Note that $$2ab+b^2+bc-7c^2\equiv b^2+bc-c^2\equiv 1(\mathrm{mod}~ 2).$$
Since $x^2\equiv x(\mathrm{mod}~ 2)$, then $$b(1+c)\equiv 1+c(\mathrm{mod}~ 2).$$
This implies that $0\equiv 1(\mathrm{mod}~ 2)$ if $b$ and $c$ are even.
Therefore, one of $b$ and $c$ is odd.
Note that $r\cdot e_1=2b+c, r\cdot e_2=2a+2b+c$ and $r\cdot e_3=b-14c$.
Thus, $r\cdot e_i$ is odd for some $i\in\{1,2,3\}$.
As a result, $X$ does not satisfy the IZD by Proposition \ref{odd-prop}.

Now we assume that $N=(0,3,4)\oplus A_1(9)$.
Note that $$e_1^2=e_1\cdot e_3=e_2\cdot e_3=0, e_2^2=4, e_1\cdot e_2=3, e_3^2=-18.$$
Let $r=ae_1+be_2+ce_3$ be the class of a $(-2)$-curve.
Then $r^2=-2$ implies that 
\begin{equation}
\label{A_{1,9}-eqI}
6ab+4b^2-18c^2=-2.
\end{equation}
We  first show that the class of $e_2$ is ample.
It suffices to show that $(e_2\cdot r)>0$ for every $(-2)$-rational curve $r$ by \cite[Proposition 2.1.4]{Huybrechts16}.
Note that 
$$
r\cdot e_2=3a+4b.
$$
If $r\cdot e_2=0$, then substitute \(3a = -4b\)  into (\ref{A_{1,9}-eqI}), we have $18c^2+4b^2=2$, which is absurd since $b$ and $c$ are non-negative integers.
Therefore, $e_2$ is an ample class.
Let $v=e_2-e_1$.
Then $v^2=-2$.
We claim that $v$ is a $(-2)$-rational curve.
Note that $v\cdot e_2=1>0$ implies that $v$ is  effective
If $v=v_1+v_2$ such that $v_1$ and $v_2$ are effective, then $v\cdot e_2\ge2$ since $e_2$ is ample.
This is absurd since $v\cdot e_2=1$.
Thus, $v$ is irreducible.
As a result, $v$ is  a $(-2)$-rational curve.
Since $v\cdot e_2=1$, $X$ does not satisfy the IZD by Proposition \ref{odd-prop}.

Now we assume that $4\le \rho(X)\le 11$.
By \cite[Theorem 5.1.5.3]{ADHL15} and \cite[Appendix]{Yu25}, and Propositions \ref{even-lattice-prop} and \ref{odd-prop}, $N$ is either $(8)\oplus 3A_1$ or $(2)\oplus A_1\oplus 2A_1(2)$.
Therefore, we complete the proof of Theorem \ref{K3-thm}.
\end{proof}
\section{On rational surfaces with nef anti-canonical divisors}
\label{Sect-rational}
\begin{definition}
Let $X$ be a smooth projective surface.
\begin{enumerate}
	\item $X$ is a \emph{del Pezzo surface}, if $-K_X$ is ample.
	\item $X$ is a \emph{weak del Pezzo surface}, if $-K_X$ is nef and big. 
\end{enumerate}
\end{definition}
\begin{definition}
\cite[Definition 5.2.1.6]{ADHL15}
Let $X_0=\bP^2,\cdots, X_r=X$ be a sequence of blow-ups of points $p_i\in X_{i-1}$, where $1\le r\le 8$.
We say that $p_1,\cdots, p_r$ are in \emph{almost general position} if
\begin{itemize}
	\item No four of them are mapped to the same line in $\bP^2$.
	\item No seven of them are mapped to the same conic in $\bP^2$.
	\item The total transform $E_i$ of the exceptional divisor over $p_i\in X_{i-1}$ is an extended $(-1)$-curve for any $i$.
\end{itemize}
\end{definition}
\begin{proof}[Proof of Theorem \ref{rational-thm}]
($\Leftarrow$) 
Let $\Delta$ be the determinant of the intersection form on the NS lattice of $X$.
Then $|\Delta|=1$ since $X$ is a rational surface.
Note that every negative curve on $X$ is a $(-1)$-rational curve if  $X$ falls into one of the cases (1), (2) and (3).
Thus, $X$ satisfies the IZD by \cite[Theorem 2.2]{BPS17}.

($\Rightarrow$) We assume that $X$ satisfies the IZD.
If $\kappa(X,-K_X)=2$, then $X$ is a weak del Pezzo surface.
By \cite[Theorem 5.2.1.7]{ADHL15}, up to isomorphy $X$ is either $\bP^1\times \bP^1$ or the Hirzebruch $F_2$ or a blow-up of $\bP^2$ at $0\le r\le 8$ points in almost general position.
If $X$ is $\bP^1\times \bP^1$ or $F_2$, then $\rho(X)=2$.
This is absurd by \cite[Claim 2.10]{Li19}.
Then $\pi: X \to\bP^2$ is a blow-up of $\bP^2$ at $0\le r\le 8$ points $p_1,\cdots, p_r$ in almost general position.
Let $H$ be the total transform of any line in $\bP^2$.
If $-K_X$ is not ample, then there exists a $(-2)$-rational curve $C$ by the adjunction formula.
Let $r\ge2$.
By \cite[Remark 5.2.1.11]{ADHL15}, $C$ is either of the form $E_i-E_{i+1}$, or it is linearly equivalent to one of the following (modulo permutation of the indices):
$$H-\sum_{k=1}^3E_k, \quad 2H-\sum_{k=1}^6E_k, \quad 3H-2E_1-\sum_{k=2}^8 E_k.$$
In all cases, $C\cdot E_{i+1}=1$ or $C\cdot E_3=1$.
This is absurd by Theorem \ref{Num-Thm}(a).
Thus  $X$ is a del Pezzo surface.
If $\kappa(X,-K_X)=1$, then $|-mK_X|$ with $m>0$ induces a relatively minimal elliptic fibration. 
By Theorem \ref{elliptic-thm}, $X$ is either  a Jacobian rational elliptic surface and $E(K)\cong E_8$ or a unnodal Halphen surface of index $m\ge2$.
\end{proof}

\subsection*{Acknowledgements}
The author would like to thank Xun Yu for answering questions.

\end{document}